\documentclass{article}
\usepackage{graphicx} 
\usepackage{tikz} 
\usepackage{tikz-cd} 
\usepackage{amsthm} 
\usepackage{amsmath, amsfonts, amssymb,  mathtools} 
\usepackage{overpic}
\usepackage{subcaption}
\usepackage{url}

\usepackage{xcolor}
\usepackage[colorlinks=true,linkcolor=blue,citecolor=red,urlcolor=blue]{hyperref}
\usepackage[nameinlink,noabbrev]{cleveref}

\newcommand{\Addresses}{{
  \bigskip
  \footnotesize

  \textsc{Department of Mathematics, Georgia Institute of Technology,
    Atlanta, Georgia 30332}\par\nopagebreak
  \textit{E-mail address}: \texttt{seaneli@gatech.edu}

  \medskip

  \textsc{Department of Mathematics, Georgia Institute of Technology,
    Atlanta, Georgia 30332}\par\nopagebreak
  \textit{E-mail address}: \texttt{swan48@gatech.edu}
}}

\theoremstyle{plain}
\newtheorem{theorem}{Theorem}[section]
\newtheorem{conjecture}{Conjecture}[theorem]

\newtheorem{corollary}[theorem]{Corollary}

\newtheorem{proposition}[theorem]{Proposition}

\theoremstyle{definition}

\newtheorem{remark}[theorem]{Remark}

\DeclarePairedDelimiter\abs{\lvert}{\rvert}%
\DeclarePairedDelimiter\norm{\lVert}{\rVert}%
\DeclarePairedDelimiter\set{\{}{\}}%
\makeatletter
\let\oldabs\abs
\def\abs{\@ifstar{\oldabs}{\oldabs*}}
\let\oldnorm\norm
\def\norm{\@ifstar{\oldnorm}{\oldnorm*}}
\let\oldset\set
\def\set{\@ifstar{\oldset}{\oldset*}}
\makeatother

\newcommand{\cR}{{\mathcal{R}}}

\newcommand{\R}{\mathbb{R}}

\title{Exotic $\R^4$'s, RBG Links, and End Floer Homology 
}
\author{Sean Eli and Shunyu Wan}
\date{}
\begin{document}
\maketitle
\begin{abstract}
We give the first pair of non-diffeomorphic exotic $\R^4$'s made by attaching diffeomorphic Casson handles onto diffeomorphic disk complements. Our examples are obtained using the RBG link construction to find slice knots with diffeomorphic slice disk complements, but whose Whitehead doubled disk complements are not diffeomorphic. We distinguish the exotic $\R^4$'s using end Floer homology.
\end{abstract}

\section{Introduction}

In this paper we study \textit{exotic $\mathbb{R}^4$'s}: smooth manifolds homeomorphic to $\mathbb{R}^4$ but not diffeomorphic to $\mathbb{R}^4$. This bizarre, uniquely 4-dimensional phenomenon was first shown to occur in the 1980s, following the landmark theorems of Donaldson and Freedman \cite{donaldson, freedman}. Despite initial progress on the classification of exotic $\R^4$'s, many elementary questions remain open \cite{elihomlidman,  freedmantaylor, gompfmenagerie, gompfstipsicz, Taylor}.

We focus on the subfamily of \textit{slice $\R^4$'s}, made by attaching a Casson handle to the boundary of a slice disk complement and deleting the boundary \cite{bizacagompf, freedmandemichelis}. The way the smooth structure of a slice $\R^4$ depends on the {choices} of disk complement and Casson handle is one of the main open questions about exotic $\R^4$'s, and has seen considerable study \cite{bizacagompf, elihomlidman, gadgil, freedmandemichelis}. 
To date, all known examples that distinguish slice $\R^4$'s require varying either the slice disk complement or the Casson handle. In particular, it is unknown whether diffeomorphic disk complements and diffeomorphic Casson handles can be combined to produce non-diffeomorphic exotic $\R^4$'s. Our main result gives the first example of this phenomenon. Thus, the smooth structure of a slice $\mathbb{R}^4$ depends not only on the choice of disk complement and Casson handle, but also on the gluing map. 

\begin{theorem}\label{thm:exoticR4}
    There exist non-diffeomorphic exotic slice $\R^4$'s made by attaching diffeomorphic Casson handles onto diffeomorphic disk complements. 
\end{theorem}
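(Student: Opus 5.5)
The plan follows the abstract's outline. First I produce two slice knots $K_0, K_1 \subset S^3$ with slice disks $D_0, D_1 \subset B^4$ whose complements $X_i = B^4 \setminus \nu D_i$ are diffeomorphic. I get them from an RBG link: a three-component framed link $R \cup B \cup G$ such that blowing down or sliding in two different ways gives two surgery descriptions of the same 4-manifold. I choose the RBG data so that each description presents a ribbon disk complement, namely $B^4$ with a 1-handle (dotted circle) and a 2-handle. The boundary of each $X_i$ is $0$-surgery $S^3_0(K_i)$, with a distinguished meridian $\mu_i$. The diffeomorphism $X_0 \cong X_1$ produced by the RBG construction need not carry $\mu_0$ to $\mu_1$, and this mismatch is exactly the gluing-map freedom the theorem is about.

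Second, I fix one Casson handle $CH$, for example the simplest one with a single positive kink at each stage. I form $R_i = (X_i \cup_{\mu_i} CH) \setminus \partial$. Here the attaching circle of $CH$ is the meridian $\mu_i$, or equivalently a $0$-framed 2-handle attached along $\mu_i$ and replaced by a Casson handle. Freedman's theorem shows each $R_i$ is homeomorphic to $\R^4$. Each $R_i$ is exotic as long as $K_i$ is not smoothly slice in a way that is compatible with the construction, and this is standard when the Whitehead doubles are not smoothly slice. The first stage of $CH$ turns the situation into one about Whitehead doubled disks: $R_i$ contains, and is exhausted by, manifolds of the form $B^4 \setminus \nu(\text{disk for } Wh(K_i))$ together with further kinky handles. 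This is why I check that the RBG construction does not extend to the Whitehead doubled disk complements, i.e.\ $B^4\setminus \nu D_{Wh(K_0)} \not\cong B^4 \setminus \nu D_{Wh(K_1)}$. That failure is what should prevent $R_0 \cong R_1$.

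Third, I distinguish $R_0$ from $R_1$ using end Floer homology, an invariant of the end. It is defined as a direct limit of Heegaard Floer groups $HF(Y_n)$ over an exhausting sequence of 3-manifolds $Y_n$ cobounding the stages. For the simple Casson handle, the $Y_n$ are obtained from $S^3_0(K_i)$ by iterated Whitehead-double-type surgeries on $\mu_i$, so they are built from the satellite $Wh^{(n)}$ patterns applied to $K_i$ and glued along the meridian. The direct limit maps are induced by 2-handle cobordisms. I compute these using knot Floer complexes: for Whitehead doubles, $\widehat{HFK}$ and the $\tau$/$V_0$-type data are governed by $\tau(K_i)$, via Hedden's formula $\tau(Wh^+(K)) = 1$ if $\tau(K)>0$ and $0$ otherwise. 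So I choose the RBG pair so that $K_0$ and $K_1$ differ in $\tau$, or in some concordance-type Floer invariant, while still having diffeomorphic disk complements. This is possible because diffeomorphic complements do not preserve the knot, only $S^3_0$ together with some curve. I then show that the ranks, or the gradings in the limit, of end Floer homology differ, with one limit nontrivial and the other trivial in a specified grading.

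The main obstacle is this last computation. I need the direct system to stabilize in a computable way and to detect the difference in $\tau$ through the meridian gluing. My first attempt is to reduce to the first stage, showing the limit maps are isomorphisms after the first stage via a mapping-cone argument. After that, I compare the $d$-invariants or the $HF^+$ of surgeries on $Wh(K_i)$.
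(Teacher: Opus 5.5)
Your outline has the right overall structure: an RBG pair, the meridian mismatch, $CH_+$ attached along the meridian, and end Floer homology. But the step that is supposed to distinguish $R_0$ from $R_1$ cannot work as proposed. Both knots bound smooth disks in $B^4$, since that is the premise, so both are slice, and every concordance invariant agrees with that of the unknot. Thus $\tau(K_0)=\tau(K_1)=0$ and $V_0(K_i)=0$, and by the Hedden formula you quote, $\tau(Wh^+(K_i))=0$ for both. Because the Whitehead double of a slice knot is slice, the same holds at every stage $Wh^{(n)}(K_i)$, so the $d$-invariants of all your $Y_n$ coincide with the unknot case. So you cannot ``choose the RBG pair so that $K_0$ and $K_1$ differ in $\tau$, or in some concordance-type Floer invariant,'' and a $d$-invariant comparison gives nothing.

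What is needed is a non-concordance invariant living in the top of the Floer groups: the maximal nontrivial Maslov grading of $\widehat{HFK}_{red}$. By Theorem~\ref{thm:endfloer} (Eli--Hom--Lidman), this grading controls the top grading of $HF^+(S^3_0(Wh(K)))$ and survives into graded end Floer homology. Two nontrivial slice knots with different values therefore give non-diffeomorphic $\R^4$'s. The same theorem also supplies exoticness for nontrivial $K$; your justification via non-slice Whitehead doubles does not apply, since $Wh^+(K_i)$ is slice. You then still need an explicit pair on which this grading differs. The paper computes it to be $2$ for $m(8_{20})$ and $4$ for its Piccirillo friend $J_0$. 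Without this invariant and such a pair, the proposal contains no argument that $R_0\not\cong R_1$.

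Your first step is also unsupported. An RBG link produces two knots with diffeomorphic $0$-traces, not two disk complements, and you give no mechanism for making both descriptions ribbon disk complements. The paper bridges this with the trace embedding lemma:
\begin{itemize}
\item Since $K_0$ is slice, there is a smooth embedding $i\colon X_0(K_0)\hookrightarrow S^4$.
\item Precomposing $i$ with a diffeomorphism $X_0(J_0)\to X_0(K_0)$ gives an embedding of $X_0(J_0)$ with the same image.
\item The closure of the complement of this common image is then a slice disk complement for both knots (Theorem~\ref{thm:traceembedding}, Corollary~\ref{cor:disks}).
\end{itemize}
Finally, showing that the Whitehead-doubled disk complements are non-diffeomorphic does not by itself rule out $R_0\cong R_1$. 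A diffeomorphism of the $\R^4$'s need not respect a chosen exhaustion, which is why an invariant of the end is what does the work.
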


We prove Theorem~\ref{thm:exoticR4} as follows. First, we use the unknotting number 1 RBG link technique introduced by Piccirillo \cite{piccirillo} (see also \cite{kegelspreer}) to construct slice knots $K_0=m(8_{20})$ and $J_0$ with diffeomorphic $0$-traces, but different maximal Maslov gradings in knot Floer homology (see Proposition~\ref{prop:knots}). By the trace embedding lemma, it follows that $K_0$ and $J_0$ admit diffeomorphic slice disk complements
in $B^4$ (see Corollary \ref{cor:disks}). We extend each disk complement to an exotic $\R^4$ by attaching the simplest Casson handle with all positive clasps, $CH_+$ \cite{gompfstipsicz}, to a meridian of the deleted knot, and removing the remaining boundary. The resulting slice $\R^4$'s are distinguished using graded end Floer homology \cite{elihomlidman, gadgil}. In the present setting, this invariant is controlled by the maximal nontrivial Maslov gradings of the knot Floer homologies of $K_0$ and $J_0$, due to recent work of Hom, Lidman, and the first author \cite{elihomlidman} (see Theorem~\ref{thm:endfloer}).

\begin{remark}
    In our example, the diffeomorphism $S^3_0(K_0)\cong S^3_0(J_0)$ coming from the RBG construction \textit{does not} send a meridian of $K_0$ to a meridian of $J_0$. Such a diffeomorphism would preserve the Casson handle attaching region, and then the diffeomorphism of the disk complements would extend over the Casson handle.
\end{remark} 

In \cite{elihomlidman}, it is shown that if $K_1$ and $K_2$ are nontrivial slice knots with different maximal Maslov gradings in knot Floer homology, then the maximal gradings of $HF^+(S^3_0(Wh(K_1)))$ and $HF^+( S^3_0(Wh(K_2)))$ differ, and this grading difference survives in the end Floer homology of the corresponding exotic $\R^4$'s. Hence, the proof of Theorem~\ref{thm:exoticR4} also gives the following phenomenon on the level of disk complements. 

\begin{theorem}\label{thm:doubleddisks}
There exist slice disks $(D_{K_0},K_0)$ and $(D_{J_0},J_0)$ in $(B^4,S^3)$ that have diffeomorphic complements, but whose Whitehead doubled disk complements are non-diffeomorphic, detected by the maximal nontrivial grading of $HF^+$ of their boundaries.
\end{theorem}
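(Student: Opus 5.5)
The plan follows the outline given for Theorem~\ref{thm:exoticR4}, stopping before the Casson handle is attached.

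\textbf{Step 1: the knots.} Use the unknotting number one RBG link technique to produce slice knots $K_0=m(8_{20})$ and $J_0$ with diffeomorphic $0$-traces $X_0(K_0)\cong X_0(J_0)$. By Proposition~\ref{prop:knots}, their knot Floer homologies have different maximal nontrivial Maslov gradings. That proposition, once established, supplies exactly this input.

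\textbf{Step 2: the disks.} Apply the trace embedding lemma. Since $K_0$ is slice, $X_0(K_0)$ embeds in $S^4$. The complement of the cocore of the $2$-handle in $S^4\setminus X_0(K_0)$ then gives a slice disk complement. Transporting the embedding through the trace diffeomorphism produces slice disks $D_{K_0}$ and $D_{J_0}$ whose complements in $B^4$ are both diffeomorphic to the same $S^4\setminus \mathrm{int}\,X_0$. This is Corollary~\ref{cor:disks}, which I would cite directly.

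\textbf{Step 3: the doubled disks.} A slice disk $D$ for $K$ yields a slice disk $Wh(D)$ for the positive-clasped Whitehead double $Wh(K)$, obtained by doubling the normal pushoff and banding the clasp. The complement of $Wh(D)$ is obtained from the complement of $D$ by a cut-and-paste along a neighborhood of the meridian-attached region. This operation is natural only with respect to the meridian of $K$. Because the RBG diffeomorphism does not carry meridian to meridian, there is no a priori reason for the doubled complements to remain diffeomorphic.

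\textbf{Step 4: distinguishing the doubled complements.} The boundary of the complement of $Wh(D)$ is $S^3_0(Wh(K))$. A diffeomorphism between the doubled disk complements would restrict to a diffeomorphism $S^3_0(Wh(K_0))\cong S^3_0(Wh(J_0))$. Such a boundary diffeomorphism preserves the absolute $\mathbb{Q}$-grading on $HF^+$, since it is canonical for torsion spin$^c$ structures and $S^3_0$ has only one torsion spin$^c$ structure. Here I invoke the cited result of \cite{elihomlidman}, recorded in Theorem~\ref{thm:endfloer}: for nontrivial slice knots, the maximal nontrivial grading of $HF^+(S^3_0(Wh(K)))$ is determined by, and strictly varies with, the maximal Maslov grading of $\widehat{HFK}(K)$. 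Both $K_0$ and $J_0$ are nontrivial, since $8_{20}$ is nontrivial and $J_0$ has a different maximal Maslov grading. Proposition~\ref{prop:knots} then shows the two maximal gradings differ, which is a contradiction.

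\textbf{Main obstacle.} Once the cited results are accepted, everything above is formal. The real work lies in Step 1: verifying, probably by computer, that the RBG-produced $J_0$ is slice and that its $\widehat{HFK}$ has a different top Maslov grading from that of $m(8_{20})$. A secondary point is to state carefully the hypotheses of the Hom--Lidman--Eli grading formula, namely the positive clasp and the use of the $0$-surgery grading, so that they match the disks built in Step 3.
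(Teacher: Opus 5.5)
Your proposal is correct and follows the paper's own argument. Corollary~\ref{cor:disks} gives the diffeomorphic disk complements, Proposition~\ref{prop:knots} gives the differing maximal Maslov gradings, and the argument of \cite{elihomlidman} turns that difference into differing maximal gradings of $HF^+(S^3_0(Wh(K_0)))$ and $HF^+(S^3_0(Wh(J_0)))$, the boundaries of the doubled complements. Two small points: this $HF^+$ comparison comes from the \emph{proof} of \cite[Theorem 1.1]{elihomlidman}, not from Theorem~\ref{thm:endfloer} as stated here, and your grading-preservation step assumes the boundary diffeomorphism is orientation-preserving, which the paper also leaves implicit.
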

More generally, starting with an unknotting number 1 slice knot of genus $g \ge 2$ often yields a pair of exotic $\R^4$'s made from the same two pieces, as in Theorem~\ref{thm:exoticR4}. We confirm this for the knots $m(8_9), 10_{129}$, and $m(11n_{42})$. The genus requirement follows from work of Kegel and Spreer \cite{kegelspreer} who show that this RBG construction does not construct new knots from twisted Whitehead doubles.  

\begin{proposition}\label{prop:examples}
Let $K$ be any of the unknotting number 1 slice knots $m(8_{20}), m(8_9), 10_{129}$, or $m(11n_{42})$. Then $K$ shares a slice disk complement with some $0$-friend $J$, and the two exotic $\R^4$'s made by attaching $CH_+$ to a meridian of either deleted disk are not diffeomorphic.
\end{proposition}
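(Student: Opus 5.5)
For each of the four knots $K \in \{m(8_{20}), m(8_9), 10_{129}, m(11n_{42})\}$ I would run the same three-stage argument used for Theorem~\ref{thm:exoticR4}, replacing $K_0$ by $K$.

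\textbf{Stage 1: construct the friend.} Since $K$ has unknotting number 1, choose a crossing change taking $K$ to the unknot. Following Piccirillo's unknotting number 1 RBG construction \cite{piccirillo}, encode it as a three-component RBG link $(R,B,G)$ in which:
\begin{itemize}
\item blowing down $R$ (after sliding $G$ off $B$) recovers $K$ with framing $0$;
\item blowing down $B$ produces a $0$-framed knot $J$.
\end{itemize}
Both results are surgery descriptions of the same $0$-trace, so $X_0(K)\cong X_0(J)$. The hypothesis $g(K)\ge 2$, together with the Kegel--Spreer analysis \cite{kegelspreer}, is what lets us expect that $J$ is not isotopic to $K$. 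I would pick the crossing so that $J$ is a reasonably small diagram, then simplify it in SnapPy or KLO to a concrete knot.

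\textbf{Stage 2: shared slice disk complement.} Since $K$ is slice and $X_0(K)\cong X_0(J)$, the trace embedding lemma (as in Corollary~\ref{cor:disks}) shows that $J$ is slice. The same argument gives slice disks for $K$ and $J$ whose complements are diffeomorphic: the $0$-trace embedded in $S^4$ has complement built from the disk complement, and the shared trace gives the identification.

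\textbf{Stage 3: distinguish the $\R^4$'s.} Attach $CH_+$ to a meridian of each deleted disk and remove the remaining boundary. By Theorem~\ref{thm:endfloer}, the graded end Floer homology of the resulting $\R^4$ is controlled by the maximal nontrivial Maslov grading of $\widehat{HFK}$ of the knot. It therefore suffices to show that the maximal Maslov gradings of $\widehat{HFK}(K)$ and $\widehat{HFK}(J)$ differ. For $K$ these are tabulated, or computable since $K$ is small (and $\delta$-thin where applicable). For $J$, which may have many crossings, I would compute $\widehat{HFK}(J)$ with Szab\'o's knot Floer program, or a bordered computation.

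\textbf{Main obstacle.} The hardest step is the Floer computation for $J$. The friend produced by the RBG construction tends to have large crossing number, so the computation must be feasible. Moreover, the gradings must actually differ, which can fail for a given unknotting crossing. If it fails, I would first try a different unknotting crossing or a different choice of RBG link, and accept the first friend whose top Maslov grading differs from that of $K$.
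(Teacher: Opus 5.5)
Your proposal follows the paper's proof essentially step for step: build the Piccirillo friend from an unknotting crossing (the paper uses the positive crossings of Figure~\ref{fig:unknotting}), obtain the common disk complement from Corollary~\ref{cor:disks}, and compare top Maslov gradings computed with Szab\'o's program; the paper's proof consists of carrying out that computation and recording the outcome in Figure~\ref{fig:plots}. Two minor corrections: the RBG step is cancellation of the red $1$-handle against the green $2$-handle (giving $X_0(K)$) or the blue one (giving $X_0(J)$), not a blow-down; and Theorem~\ref{thm:endfloer} is stated for $\widehat{HFK}_{red}$, which is harmless because the discarded generator sits in Maslov grading $0$, so distinct top gradings of $\widehat{HFK}$ force distinct top gradings of $\widehat{HFK}_{red}$ (the paper instead notes via sliceness that the maxima are unchanged).
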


\begin{remark}
    In Proposition~\ref{prop:examples}, various knots are mirrored in order to find a positive unknotting crossing. See Figure \ref{fig:unknotting}. In general this procedure can also be done for negative crossings. 
\end{remark}
Based on our observations, we make a conjecture about $\widehat{HFK}$ for certain $0$-friends. 
\begin{conjecture}
    Let $K$ be a homologically thin slice knot with genus $g\ge 2$, and unknotting number 1. Assume $K$ has a positive unknotting crossing and let $J$ be the corresponding Piccirillo friend. Then $\widehat{HFK}(J)$ is supported nontrivially in exactly $g$ diagonals, shifted up in Maslov grading from $K$. Hence the corresponding exotic $\R^4$ pairs are distinct.
\end{conjecture}

In Section~\ref{sec:tracedisk} we find $J_0$ by applying the unknotting number 1 RBG construction to $K_0 = m(8_{20})$, and show these knots admit diffeomorphic slice disk complements. In Section~\ref{sec:exoticR4} we prove Theorem~\ref{thm:exoticR4}, Theorem~\ref{thm:doubleddisks}, and Proposition~\ref{prop:examples}. Throughout, we only use positive, untwisted Whitehead doubles, and $CH_+$ denotes the simplest Casson handle with all positive clasps. 

\subsection{Acknowledgments}

We thank Marc Kegel for his help in finding the Piccirillo friend using SnapPy and SageMath. SE thanks John Etnyre for many helpful discussions and for his continuous support. We also thank B\"ulent Tosun, Marc Kegel, Kyle Hayden, Lisa Piccirillo, and Jen Hom for helpful discussions. SE was partially supported by NSF grant DMS-2203312. SW was partially supported by Georgia Tech postdoc funding.  

\section{Traces and disk complements}\label{sec:tracedisk}

  The \textit{$0$-trace} of a knot $K$ is the $4$-manifold constructed by attaching a $2$-handle to $B^4$ along $K$ with the $0$-framing. Given an unknotting number $1$ knot $K$, Piccirillo's RBG link technique produces another knot $J$ with the same $0$-trace as $K$. See \cite{kegelspreer, piccirillo} for details about this construction. The resulting knot $J$ is called a \textit{Piccirillo friend} of $K$, as in \cite{kegelspreer}. Note $K$ is slice if any only if any Piccirillo friend is slice, by the trace embedding lemma. 
    \begin{figure}[ht!]
\centering
       \includegraphics[width=\textwidth]{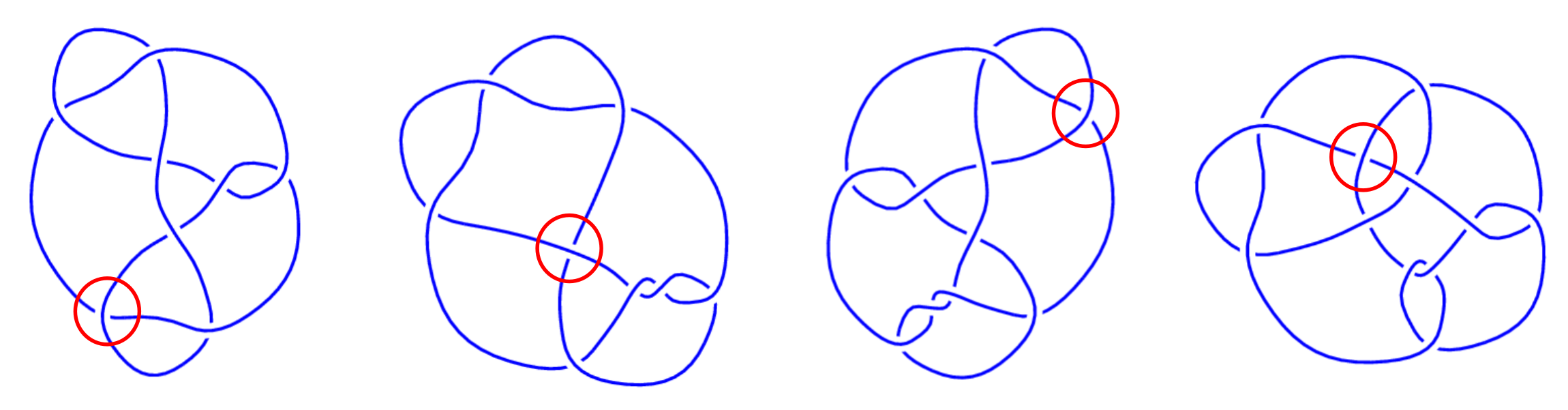}
\caption{Positive unknotting crossings for the slice knots $m(8_{20}), m(8_9), 10_{129}$, and $m(11n_{42})$ respectively. Figures from Knotinfo \cite{knotinfo}.}
\label{fig:unknotting}
\end{figure}

Let $K_0$ be the knot $m(8_{20})$, which has a positive unknotting crossing as indicated in Figure~\ref{fig:unknotting}. The left diagram of Figure~\ref{fig:RBG} is
the RBG link corresponding to the indicated crossing (namely, the lower left red-red crossing). By attaching a $1$-handle corresponding to the red circle, a $(-2)$-framed $2$-handle along the blue circle, and a $0$-framed $2$-handle along the green circle, we obtain a $4$-manifold which can be thought of as the $0$-trace of $K_0$, or as the $0$-trace of a Piccirillo friend, which we call $J_0$. Canceling the red-green pair yields the $0$-trace for $K_0$, but canceling the red-blue pair instead yields the $0$-trace for $J_0$. The knots $K_0$ and $J_0$ are shown in Figure~\ref{fig:RBG}.

     \begin{figure}[htb!]
\centering
        \begin{subfigure}[b]{0.32\textwidth}
            \begin{overpic}[width=\textwidth, 
 unit=1mm, tics=5]{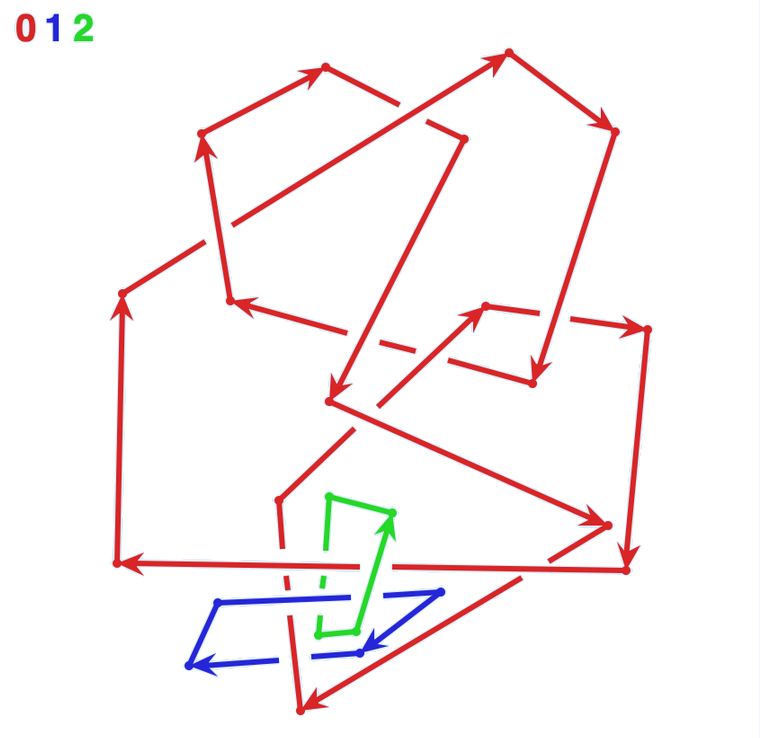}
            \end{overpic}
        \end{subfigure} \hspace{5mm} \begin{subfigure}[b]{0.27\textwidth}
            \begin{overpic}[width=\textwidth, 
 unit=1mm, tics=5]{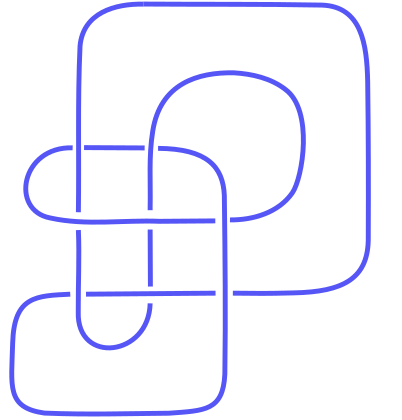}
            \end{overpic}
        \end{subfigure} \hspace{5mm}
        \begin{subfigure}[b]{0.27\textwidth}
            \begin{overpic}[width=\textwidth, 
 unit=1mm, tics=5]{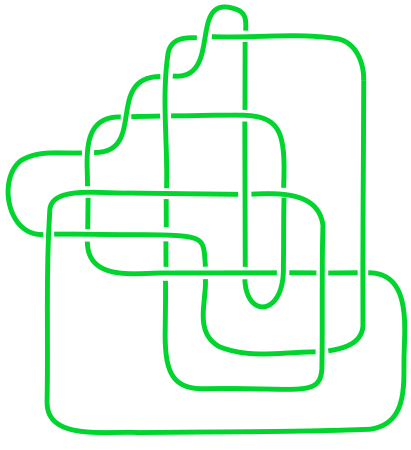}
 \end{overpic}
        \end{subfigure}
\caption{Left: RBG link obtained by applying the unknotting number 1 construction to a positive unknotting crossing of $m(8_{20})$, drawn as a SnapPy PLink input \cite{SnapPy}. Middle: the knot $K_0 = m(8_{20})$. Right: the knot $J_0$, which is a Piccirillo friend of $K_0$. Knot diagrams were obtained using SnapPy inside Sage \cite{sagemath}.}
\label{fig:RBG}
\end{figure}

Since $K_0$ and $J_0$ have diffeomorphic $0$-traces, they admit diffeomorphic slice disk complements. This fact is likely known to experts but we are not aware of a proof in the literature. We give a proof based on the \textit{smooth trace embedding lemma}, which states that a knot is smoothly slice if and only if its $0$-trace embeds smoothly in $S^4$. Indeed, the complement of the trace embedded in $S^4$ is the desired slice disk complement. 
\begin{theorem}\label{thm:traceembedding}
    Let $K$ be a knot in $S^3$ and $X_0(K)$ its $0$-trace. Suppose $i : X_0(K)\hookrightarrow S^4$ is a smooth embedding. Then there exists a smooth slice disk $(D,K)\hookrightarrow (B^4,S^3)$ such that $S^4\setminus \text{int}(i(X_0(K)))$ is {orientation-reversing} 
    diffeomorphic to the slice disk complement $B^4\setminus \nu(D)$.
\end{theorem}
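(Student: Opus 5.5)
The plan is to locate the cocore of the $2$-handle and use it to build the slice disk. The trace decomposes as $X_0(K)=B^4\cup_K h$, where $h=D^2\times D^2$ is attached along a tubular neighborhood $\nu K\cong K\times D^2$ using the $0$-framing. The cocore $C=\{0\}\times D^2\subset h$ is a properly embedded disk whose boundary is a meridian-type curve in $\partial X_0(K)=S^3_0(K)$. The first step is to choose a small closed $4$-ball $B\subset i(B^4)$ lying in the interior of the $0$-handle of $i(X_0(K))$.

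Set $W=S^4\setminus \mathrm{int}(B)$. Since $S^4$ minus an open ball is a ball, $W$ is a smooth $4$-ball. Its boundary is $\partial B\cong S^3$, taken with the opposite orientation. We have
\[ W = \bigl(S^4\setminus \mathrm{int}\, i(X_0(K))\bigr)\cup \bigl(i(X_0(K))\setminus \mathrm{int}(B)\bigr). \]
The second piece is a collar $S^3\times I$ (from $i(B^4)\setminus \mathrm{int} B$) with the $2$-handle $h$ attached on its outer boundary. Now reverse the roles and read $h$ from the other side: $h=D^2\times D^2$ viewed from the complement is a neighborhood of the core disk. Concretely, $i(X_0(K))\setminus \mathrm{int}(B)$ equals $(S^3\times I)\cup h$. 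Pushing the core $D^2\times\{0\}$ of $h$ together with the trace $K\times I$ through the collar gives a properly embedded smooth disk $D\subset W$ with $\partial D$ equal to the copy of $K$ in $\partial B=\partial W$. Smoothing corners along $K\times\{1\}$ is routine.

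Next, identify the complement of $D$. A closed tubular neighborhood of $D$ in $W$ is $\nu(K\times I)\cup h$, that is, $(\nu K\times I)\cup (D^2\times D^2)$. Removing it from $(S^3\times I)\cup h$ leaves $(S^3\setminus \nu K)\times I$. This is a collar of part of the boundary of the trace complement $E=S^4\setminus \mathrm{int}\, i(X_0(K))$, glued along $S^3_0(K)\setminus (\text{cocore region})$. More precisely, $W\setminus \nu(D)=E\cup_{\partial}(\text{collar})$, and a collar attached along a codimension-zero boundary piece does not change the diffeomorphism type after rounding corners. Hence $W\setminus\nu(D)\cong E$.

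Finally, orientations. $W$ gets its orientation from $S^4$, but the natural orientation of $\partial W=\partial B$ as the boundary of $B$ is the one in which $K$ sits in $S^3$ as the original knot. Identifying $(W,\partial W)$ with $(B^4,S^3)$ so that $\partial W$ is the standard $S^3$ containing $K$ (not its mirror) therefore forces an orientation reversal of $W$. This yields the orientation-reversing diffeomorphism $E\to B^4\setminus\nu(D)$. I expect the main care to be needed in two places. The first is this orientation bookkeeping, so that the disk bounds $K$ rather than $m(K)$. The second is a clean statement that the collar absorption is a diffeomorphism; here I would cite the uniqueness of collars and corner smoothing.
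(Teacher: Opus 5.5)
Your proof is correct and is essentially the paper's argument. The paper isotopes $i$ so that $i(h^0)$ is one hemisphere of $S^4=B^4\cup -B^4$, then reads the core of $i(h^2)$ as a slice disk for $m(K)$ in the other hemisphere, whose complement is exactly the trace exterior; you instead delete a smaller concentric ball from $i(h^0)$ and absorb the resulting collar, which amounts to the same thing (both rely on the complement of a smooth $4$-ball in $S^4$ being a ball).

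One slip: your opening sentence says the slice disk is built from the cocore, but your construction correctly uses the core $D^2\times\{0\}$ extended by $K\times I$.
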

\begin{proof}
    The $0$-trace $X_0(K)$ consists of a $0$-handle $h^0$ union a 2-handle $h^2$ attached along $K\subset \partial h^0$ with the $0$-framing.   
     Since $i : X_0(K)\hookrightarrow S^4$ is a smooth embedding, its restriction to the $0$-handle
    $i|_{h^0} : h^0\hookrightarrow S^4$ is a smooth embedding of a standard compact ball in $S^4$. Thinking of $S^4$ as the double $B^4\cup -B^4$, after smooth isotopy we may assume $i(h^0) = B^4$. Then the complement of $\text{int}(i(h^0))$ in $S^4$ is $-B^4$, and $i(h^2)$ is a tubular neighborhood of $i(C)$ in $-B^4$, where $C\subset h^2$ is the core 2-disk. Since $i(K)$ is the knot $m(K) \subset -B^4$, we see an embedding of a slice disk
    $(i(C), m(K)) \hookrightarrow (-B^4,-S^3)$. Thus $(-B^4)\setminus \nu(i(C))$ is a slice disk complement for $m(K)$, but this is the same as $S^4\setminus \text{int}(i(X_0(K)))$.   
\end{proof}

\begin{corollary}\label{cor:disks}
    Suppose $K,J$ are slice knots in $S^3$ with $X_0(K)\cong X_0(J)$. Then there exist slice disks $(D_K,K)\hookrightarrow (B^4,S^3)$, $(D_J,J)\hookrightarrow (B^4,S^3)$ such that $B^4\setminus \nu(D_K)\cong B^4\setminus \nu(D_J)$.
\end{corollary}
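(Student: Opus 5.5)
The plan is to pass through a smooth embedding of the common trace into $S^4$ and read off both disk complements from Theorem~\ref{thm:traceembedding}. The orientation bookkeeping will force us to work with mirrors. By Theorem~\ref{thm:traceembedding}, any embedding $i: X_0(K')\hookrightarrow S^4$ produces a slice disk for $m(K')$ whose complement is the trace complement, up to the orientation reversal recorded there.

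First, since $K$ is slice, so is $m(K)$. By the (easy direction of the) smooth trace embedding lemma, $X_0(m(K))$ embeds smoothly in $S^4$. Concretely, take a slice disk $\Delta$ for $m(K)$ in $-B^4$ viewed as the second half of $S^4 = B^4\cup -B^4$. Then $B^4\cup \nu(\Delta)$ is a copy of the $0$-trace, since the disk framing is $0$.

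Next, use the diffeomorphism $X_0(K)\cong X_0(J)$, which also gives $X_0(m(K))\cong X_0(m(J))$ by reversing orientations. Let $\phi: X_0(m(J))\to X_0(m(K))$ be this diffeomorphism, and let $i: X_0(m(K))\hookrightarrow S^4$ be the embedding from the first step. We then have two embeddings into $S^4$ with literally the same image $W=i(X_0(m(K)))$, namely $i$ and $i\circ\phi$.

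Apply Theorem~\ref{thm:traceembedding} to each embedding. From $i$ we get a slice disk $D_K$ for $m(m(K))=K$ with $B^4\setminus\nu(D_K)$ orientation-reversing diffeomorphic to $S^4\setminus \mathrm{int}(W)$. From $i\circ\phi$ we get a slice disk $D_J$ for $J$ whose complement is likewise orientation-reversing diffeomorphic to the same $S^4\setminus\mathrm{int}(W)$. Composing one of these diffeomorphisms with the inverse of the other gives an orientation-preserving diffeomorphism $B^4\setminus\nu(D_K)\cong B^4\setminus\nu(D_J)$.

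The main obstacle is orientation bookkeeping. One must check that the trace diffeomorphism passes to the mirrors: $X_0(m(K)) = -X_0(K)$ as oriented manifolds, so reversing orientation on both sides works. One must also check that the proof of Theorem~\ref{thm:traceembedding} applies to an arbitrary embedding. This requires the isotopy of the $0$-handle to the standard $B^4$, which is fine by the disk theorem, and that the core of the $2$-handle is then a slice disk in the complementary ball. If one prefers to avoid mirrors, the alternative is to embed $-X_0(K)$ and track signs directly; either way, the only content is that a single subset $W\subset S^4$ is simultaneously a trace of both knots.
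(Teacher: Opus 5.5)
Your strategy is the paper's: embed one trace in $S^4$, precompose with the trace diffeomorphism so that a single subset $W\subset S^4$ is a trace of both knots, apply Theorem~\ref{thm:traceembedding} to both embeddings, and compose.

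The mirror detour is unnecessary, though. It comes from misreading that theorem, and it introduces a sign error. Applied to an embedding of $X_0(K')$, Theorem~\ref{thm:traceembedding} gives a slice disk for $K'$ itself, with the trace complement orientation-reversing diffeomorphic to the disk complement. Equivalently, it gives a disk for $m(K')$ with an orientation-preserving identification. You apply both the mirror and the reversal.

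So for $W=i(X_0(m(K)))$, your claim that some disk $D_K$ bounded by $K$ has $B^4\setminus\nu(D_K)$ orientation-reversing diffeomorphic to $S^4\setminus\mathrm{int}(W)$ does not follow from the theorem, and it can fail. Indeed $\partial(S^4\setminus\mathrm{int}(W))=-S^3_0(m(K))=S^3_0(K)=\partial(B^4\setminus\nu(D_K))$. Such a map would therefore restrict to an orientation-reversing self-diffeomorphism of $S^3_0(K)$, which need not exist.

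Because you make the identical slip for $J$, it cancels in the composite. The corollary, which only asks for a diffeomorphism, still follows.

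The clean fix is to drop the mirrors. Take $i\colon X_0(K)\hookrightarrow S^4$ and $j=i\circ\psi$ for $\psi\colon X_0(J)\to X_0(K)$, and apply the theorem to $i$ and $j$ directly, exactly as the paper does.
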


\begin{proof}
    Since $K$ is slice, the trace embedding lemma gives a smooth embedding $i: X_0(K)\hookrightarrow S^4$. Letting $\psi: X_0(J) \to X_0(K)$ be a diffeomorphism, we find that $j := i\circ \psi : X_0(J)\hookrightarrow S^4$ is a smooth embedding with the same image as $i$. Applying Theorem~\ref{thm:traceembedding} to $i$ shows there is a smooth slice disk $(D_K,K)\hookrightarrow (B^4,S^3)$ such that $\overline{S^4\setminus \text{int}(i(X_0(K)))}$ is diffeomorphic to $B^4\setminus \nu(D_K)$. Applying Theorem~\ref{thm:traceembedding} to $j$ instead gives a smooth slice disk $(D_J,J)\hookrightarrow (B^4,S^3)$ such that $\overline{S^4\setminus \text{int}(j(X_0(J)))}$ is diffeomorphic to $B^4\setminus \nu(D_J)$. But then
    \[B^4\setminus \nu(D_J)\cong \overline{S^4\setminus \text{int}(j(X_0(J)))} = \overline{S^4\setminus \text{int}(i(X_0(K)))} \cong B^4\setminus \nu(D_K). \]\end{proof}
    
\section{Distinguishing exotica with end Floer homology}\label{sec:exoticR4}
To distinguish slice $\R^4$'s, we use the following theorem of Hom, Lidman, and the first author \cite{elihomlidman}. In the following, $\widehat{HFK}_{red}$ is the submodule of $\widehat{HFK}$ obtained by modding out a generator that survives in the spectral sequence to $\widehat{HF}(S^3)$.

\begin{theorem}[Theorem 1.1 of \cite{elihomlidman}]\label{thm:endfloer}
Let $\cR$ be a slice $\R^4$ built by attaching $CH_+$ to a slice disk complement $(B^4,S^3)\setminus \nu(D^2,K)$ for a nontrivial slice knot $K$, along a $0$-framed meridian of the deleted disk, and removing the boundary. Then $\cR$ is not diffeomorphic to $\R^4$. Moreover, if $K_1$ and $K_2$ are two nontrivial slice knots whose knot Floer homology $\widehat{HFK}_{red}$ has different maximal nontrivial Maslov gradings, then the exotic $\R^4$'s $\cR_1$ and $\cR_2$ built as above are not diffeomorphic with any choice of orientations. 
\end{theorem}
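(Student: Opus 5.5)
The plan follows the strategy of \cite{elihomlidman}: describe $\cR$ by a compact exhaustion, compute Heegaard Floer homology of the boundaries of the exhaustion pieces, and check that a grading-sensitive feature survives in the limit. Write $X = (B^4,S^3)\setminus\nu(D^2,K)$ and let $CH_+^{(k)}$ be the first $k$ stages of $CH_+$. Set $X_k = X\cup CH_+^{(k)}$, attached along a $0$-framed meridian, so that $\cR = \bigcup_k \mathrm{int}(X_k)$.

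\textbf{Identifying the boundaries.} First I would identify $Y_k = \partial X_k$ by a Kirby calculus computation. Draw $X$ as $K$ with a dot, or as the complement of a ribbon disk, and draw the kinky handles of $CH_+$ as $0$-framed Whitehead-type clasps. Each new positive kinky handle should replace the previous attaching curve by its positive untwisted Whitehead double. The expected result is
\[
Y_k \cong S^3_0(Wh^{k}(K))
\]
up to orientation. Here $Wh^k$ is the $k$-fold iterated positive untwisted Whitehead double, and the sign conventions must be tracked carefully.

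\textbf{End Floer homology.} Next I would take the end Floer homology of $\cR$ (following \cite{gadgil}) to be the direct limit of $HF^+(Y_k)$ under the maps induced by the cobordisms $X_{k+1}\setminus \mathrm{int}(X_k)$, restricted to the torsion $\mathrm{Spin}^c$ structure. This is a diffeomorphism invariant of $\cR$ up to reversing orientation, and it should retain absolute gradings because each cobordism has $b_2^+=0$ and is built from handles of controlled type. For standard $\R^4$ the limit is the answer for the standard end, essentially $HF^+(S^3)$ with its bottom at grading $0$.

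\textbf{Computing $HF^+(Y_k)$.} Since $K$ is slice, $V_0(K)=0$, and the same holds for its Whitehead doubles, so the $d$-invariant contributes nothing. The information must therefore sit in the reduced part. I would use Hedden's computation of $\widehat{HFK}(Wh(K))$: in Alexander grading $1$ it contains copies of $\widehat{HFK}(K)$, with Maslov gradings shifted in a controlled way. Feeding this into the mapping cone formula for $0$-surgery (for genus-one knots only $A_0$ and $B$ matter), the maximal nontrivial grading of $HF^+_{red}(S^3_0(Wh(K)))$ should equal the maximal Maslov grading of $\widehat{HFK}_{red}(K)$ plus a universal shift. Nontriviality of $K$ makes this reduced group nonzero. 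Iterating, or noting that later doubles only add structure below that grading, gives the same top grading for every $Y_k$, $k\ge 1$.

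\textbf{Main obstacle.} The hardest step is showing that the top-graded reduced class survives the direct limit, that is, that the cobordism maps $HF^+(Y_k)\to HF^+(Y_{k+1})$ are injective on it. My first attempt would compute the maps using the fact that each cobordism is a twisted-double or plumbing cobordism. Concretely, I would describe it as $0$-surgery on a knot in $Y_k$ and use the surgery exact triangle together with grading shifts, which are computable from $c_1^2$ and $\chi,\sigma$. A grading count should show that the relevant third term vanishes in that degree.

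\textbf{Conclusion.} Granting survival, the end Floer homology of $\cR$ has a nontrivial reduced part, so $\cR\not\cong\R^4$. Moreover, the maximal grading of this reduced part recovers the maximal Maslov grading of $\widehat{HFK}_{red}(K)$. If $K_1$ and $K_2$ have different maximal gradings, then $\cR_1\not\cong\cR_2$. The orientation-reversed versions are handled by the corresponding statement for minimal gradings of the dual groups.
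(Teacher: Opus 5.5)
The paper does not reprove this result. It imports it from \cite{elihomlidman} and summarizes the argument as follows: compute the graded end Floer homology of $\cR$; the top Maslov grading of $\widehat{HFK}_{red}(K)$ controls the top reduced grading of $HF^+(S^3_0(Wh(K)))$; and that grading persists in the end invariant. Your outline agrees with this summary: an exhaustion $X_k$, $\partial X_k$ as $0$-surgery on iterated Whitehead doubles, and a direct limit of $HF^+$.

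As a proof, however, it has a genuine gap at exactly the step that carries the theorem. You never show that the top-graded reduced class survives the direct limit. Without this nothing follows, not even $\cR\not\cong\R^4$.

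The tool you propose for this step is also set up incorrectly. The piece $W_k$ from $Y_k$ to $Y_{k+1}$ is $Y_k\times I$ with a kinky handle $\kappa\cong S^1\times B^3$ glued along a solid torus. Relative to $Y_k$ it is therefore a $1$-handle followed by a $2$-handle, and $\chi(W_k)=0$. It is not $0$-surgery on a knot in $Y_k$, which would have $\chi=1$ and induce a different map with a different grading shift. A correct version must factor through $Y_k\# S^1\times S^2$ and study the $2$-handle along the meridian of $Wh^k(K)$, clasped through the new $1$-handle. Even then, an exact triangle plus a grading count does not give injectivity. You would have to compute the third term (another surgery on $Y_k\# S^1\times S^2$, a manifold with $b_1>0$) in the relevant degree, and control how the triangle maps split over spin$^c$ structures. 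Nothing in the proposal does this, and it is the real content of \cite{elihomlidman}.

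Two further steps are asserted rather than proved. First, Hedden's formula describes $\widehat{HFK}(Wh(K),1)$ through the homologies of the filtration subcomplexes $\mathcal{F}(K,i)\subset\widehat{CF}(S^3)$, not through copies of $\widehat{HFK}(K)$. So two things need an argument: relating its top degree to the top Maslov grading of $\widehat{HFK}_{red}(K)$, and showing that this top degree is the same for every $Wh^k(K)$. The latter is what lets you read off the maximal grading of the limit from $Y_1$. Second, the clause ``with any choice of orientations'' does not follow from minimal gradings of dual groups. Reversing the orientation of $\cR$ replaces $Y_k$ by $-Y_k$ and $W_k$ by $\overline{W_k}$. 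Under Heegaard Floer duality, this new direct system corresponds to the $HF^-$ maps of the upside-down cobordisms $Y_{k+1}\to Y_k$, not to the dual of your system, so it needs its own survival computation.
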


In \cite{elihomlidman}, Theorem~\ref{thm:endfloer} was used to distinguish exotic $\R^4$'s made by attaching $CH_+$ to different slice disk complements. Theorem~\ref{thm:endfloer} is proven by computing the \textit{graded end Floer homology} of the end of $\cR$; the maximal Maslov grading of $K$ controls the maximal grading of $HF^+(S^3_0(Wh(K))$ and hence that of the graded end Floer homology.

\begin{proposition}\label{prop:knots}
    Let $K_0$ be the knot $m(8_{20})$, and $J_0$ its Piccirillo friend from Figure~\ref{fig:RBG}.
    Then $\widehat{HFK}_{red}(K_0)$ has maximal nontrivial Maslov grading 2 and $\widehat{HFK}_{red}(J_0)$ has maximal nontrivial Maslov grading 4.
\end{proposition}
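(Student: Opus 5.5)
The plan is to compute $\widehat{HFK}$ of both knots directly and then read off the maximal Maslov grading after removing the distinguished generator that survives to $\widehat{HF}(S^3)$.

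\textbf{The knot $K_0=m(8_{20})$.} Here $8_{20}$ is quasi-alternating, hence homologically thin. This means $\widehat{HFK}(K_0)$ is supported on the diagonal $M = A + \sigma/2$, and since $K_0$ is slice, $\sigma = 0$, so the support is $M = A$. The Alexander polynomial is $\Delta_{K_0}(t) = (t^2 - t + 1)^2 = t^2 - 2t + 3 - 2t^{-1} + t^{-2}$, which has degree $2 = g(K_0)$. Thinness then determines $\widehat{HFK}(K_0)$ completely. It has rank $1, 2, 3, 2, 1$ in Alexander gradings $2, 1, 0, -1, -2$, placed in Maslov gradings $2, 1, 0, -1, -2$. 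The generator surviving to $\widehat{HF}(S^3)$ sits in grading $(0,0)$, because $\tau = 0$. Removing it leaves the top class $(M,A) = (2,2)$, so the maximal nontrivial Maslov grading of $\widehat{HFK}_{red}(K_0)$ is $2$.

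\textbf{The knot $J_0$.} No structural shortcut such as thinness is available, so we compute. First we identify $J_0$ concretely: take the diagram produced by the red--blue cancellation in Figure~\ref{fig:RBG}, export it from SnapPy as a PD code, and feed it to Szab\'o's knot Floer homology calculator (available through SnapPy's \texttt{knot\_floer\_homology()}). This returns the full bigraded $\widehat{HFK}(J_0)$ together with $\tau$ and $\epsilon$. We expect $\tau(J_0) = 0$ and $\epsilon = 0$, as forced by sliceness. As consistency checks, $\Delta_{J_0} = \Delta_{K_0}$, because the $0$-surgeries agree and hence so do their Alexander modules; $g(J_0) \ge 2$; and the $\delta$-grading should be non-constant.

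\textbf{Locating the surviving generator and reading the answer.} We pin down the generator surviving to $\widehat{HF}(S^3)$ as the one in bigrading $(0,0)$ that generates the vertical homology. Concretely, we take the $\mathbb{F}[U]$-module $HFK^-$ from the program, or argue with ranks, using that $\tau = 0$ places it at $A = 0$, $M = 0$. We then check that after removing it the top nonzero Maslov grading is $4$. We expect this class to occur at a higher Alexander grading, probably $A = 2$, shifted up two diagonals, in line with the conjecture in the introduction. If several generators lie at grading $(0,0)$, the maximum is unaffected unless the top class itself is the surviving one. That cannot happen when $4 \neq 0$, so the answer is robust to this choice.

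The main obstacle is the $J_0$ computation. Obtaining a correct diagram of $J_0$ from the RBG handle moves is error-prone, and the computation itself is large. We would cross-check the result against the hyperbolic volume and the identification of $S^3_0(J_0) \cong S^3_0(K_0)$ in SnapPy before trusting the grading output.
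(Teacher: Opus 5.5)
Your proposal is correct. For $J_0$ it is the paper's argument: compute $\widehat{HFK}(J_0)$ by machine with Szab\'o's program through SnapPy. Then observe that the generator surviving to $\widehat{HF}(S^3)$ lies in Maslov grading $0$, with Alexander grading $\tau=0$, so it cannot affect a maximum of $4$.

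The difference is for $K_0$. The paper computes it by machine too. You instead argue structurally: $8_{20}$ is quasi-alternating, hence thin, and then $\sigma=0$ and $\Delta=(t^2-t+1)^2$ force $\widehat{HFK}(K_0)$ onto the diagonal $M=A$, with ranks $1,2,3,2,1$ and top class at $(2,2)$. This gives a computer-free explanation of the value $2$, and it shows the answer for $K_0$ is unchanged under mirroring. The real content of the proposition, the value $4$ for $J_0$, remains a computation in both versions.

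One point where the paper is more careful than your plan: the answer for $J_0$ does depend on chirality. The maximal Maslov grading of $\widehat{HFK}(m(J_0))$ is minus the minimal grading of $\widehat{HFK}(J_0)$, so a mirrored diagram could give a different number, possibly even $2$. The paper handles this by computing from the oriented exterior obtained by Dehn filling the RBG link, which it notes introduces no mirror ambiguity.

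None of your listed checks would catch a mirror error: $\Delta$, $\tau=\epsilon=0$, the genus bound and the hyperbolic volume are all mirror-invariant. Only an \emph{orientation-preserving} identification $S^3_0(J_0)\cong S^3_0(K_0)$ would catch it, so that is the check you should insist on.
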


\begin{proof}
    Working with SnapPy inside Sage, we used Szab\'o's knot Floer calculator to compute $\widehat{HFK}$ of both knots, which are plotted in Figure~\ref{fig:HFK}. Inputting the left diagram of Figure~\ref{fig:RBG} into SnapPy's PLink editor, and running the following commands, computes $\widehat{HFK}$ of $K_0$. Note, the first line opens the PLink editor, which is where the RBG diagram should be entered. 
    \begin{verbatim}
         L = snappy.Manifold()
         L.dehn_fill([(0,1),(0,0),(0,1)])
         K = L.filled_triangulation()
         P = K.exterior_to_link()
         P.knot_floer_homology()\end{verbatim}Swapping the second line with 
    \begin{verbatim}
        L.dehn_fill([(0,1),(-2,1),(0,0)])\end{verbatim} and keeping the other lines the same gives $\widehat{HFK}$ of $J_0$. Note, SnapPy’s
    exterior\_to\_link()
command recovers a knot diagram from the given exterior, based on the algorithm of Dunfield-Obeidin-Rudd \cite{DOR}. Since the computation is performed with an oriented triangulation and specified peripheral curves, the resulting diagram is determined up to this oriented exterior data; in particular, the procedure does not introduce an ambiguity between a knot and its mirror. Since $K_0$ and $J_0$ are slice, we see that for both knots the maximal nontrivial Maslov grading of $\widehat{HFK}_{red}$ is the same as that of $\widehat{HFK}$, and the Proposition follows. 
\end{proof}

 \begin{figure}[ht]
\centering
         \begin{overpic}[width=0.7\textwidth, 
 unit=1mm, tics=5]{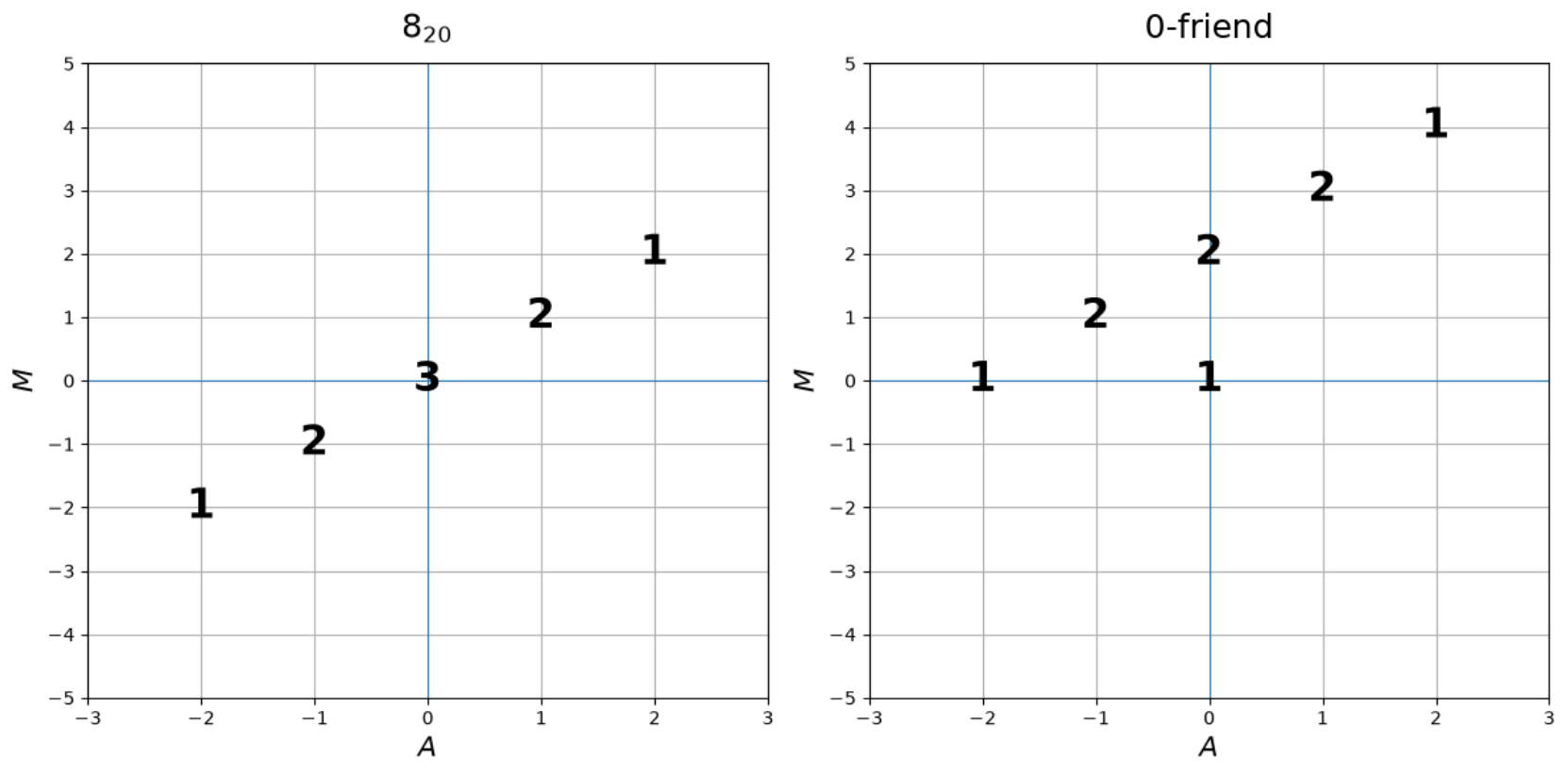}
 \end{overpic}
\caption{Left: Ranks of $\widehat{HFK}(K_0 = m(8_{20}))$. Right: Ranks of $\widehat{HFK}(J_0)$. The horizontal axis represents the Alexander grading and the vertical axis represents the Maslov grading.}
\label{fig:HFK}
\end{figure}

\begin{proof}[Proof of Theorem \ref{thm:exoticR4}]
    The knots $K_0$ and $J_0$ of Proposition \ref{prop:knots} admit slice disks $D_{K_0}$ and $D_{J_0}$ with diffeomorphic complements, by Corollary \ref{cor:disks}. Proposition \ref{prop:knots} shows the knot Floer homologies of $K_0$ and $J_0$ have different nontrivial maximal Maslov gradings, thus by Theorem \ref{thm:endfloer} the slice $\R^4$'s obtained by attaching Casson handles to their corresponding disk complements (i.e. writing the boundary as $S^3_0(K_0)$, and attaching the Casson handle along a meridian to $K_0$, versus writing the boundary as $S^3_0(J_0)$, and attaching the Casson handle along a meridian to $J_0$) are not diffeomorphic.
\end{proof}

\begin{proof}[Proof of Theorem~\ref{thm:doubleddisks}]
Let the slice disks $D_{K_0}$ and $D_{J_0}$ be as in the proof of Theorem~\ref{thm:exoticR4}. Since the maximal Maslov gradings of $\widehat{HFK}_{red}(K_0)$ and $\widehat{HFK}_{red}(J_0)$ differ, the proof of \cite[Theorem 1.1]{elihomlidman} shows the maximal gradings of $HF^+(S^3_0(Wh(K_0))$ and $HF^+(S^3_0(Wh(J_0))$ differ. Thus, the Whitehead doubled disk complements are distinct, detected by $HF^+$ of their boundaries.    
\end{proof}

\begin{remark}
     While one can also distinguish $0$-surgeries on Whitehead doubles using their $JSJ$ decompositions, we need the Heegaard Floer result in order to distinguish the exotic $\R^4$'s.
\end{remark}

\begin{proof}[Proof of Proposition~\ref{prop:examples}]
    The case of $m(8_{20})$ is covered in the proof of Theorem~\ref{thm:exoticR4}. Performing the RBG link technique on the indicated positive unknotting crossings in Figure~\ref{fig:unknotting} yields $0$-friends for each of the remaining three knots. The knot Floer homologies of all three pairs were computed as in Proposition~\ref{prop:knots} and their ranks are plotted in Figure~\ref{fig:plots}. The proof of Theorem~\ref{thm:exoticR4} now yields the claim.
\end{proof}

\begin{figure}[ht]
    \centering
    \begin{subfigure}[b]{0.6\textwidth}
            \begin{overpic}[width=\textwidth, 
 unit=1mm, tics=5]{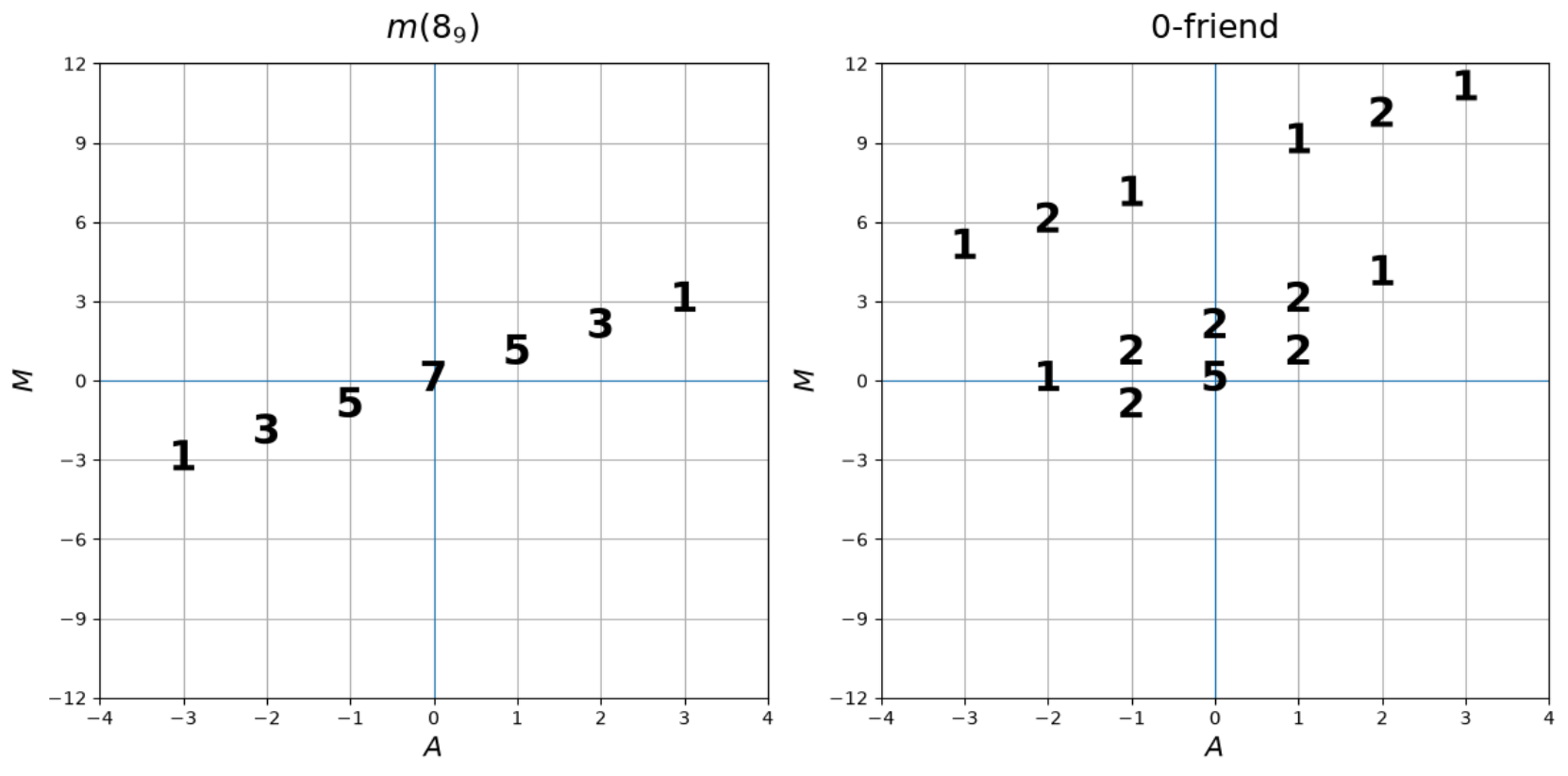}
 \end{overpic}
        \end{subfigure}
        \begin{subfigure}[b]{0.6\textwidth}
            \begin{overpic}[width=\textwidth, 
 unit=1mm, tics=5]{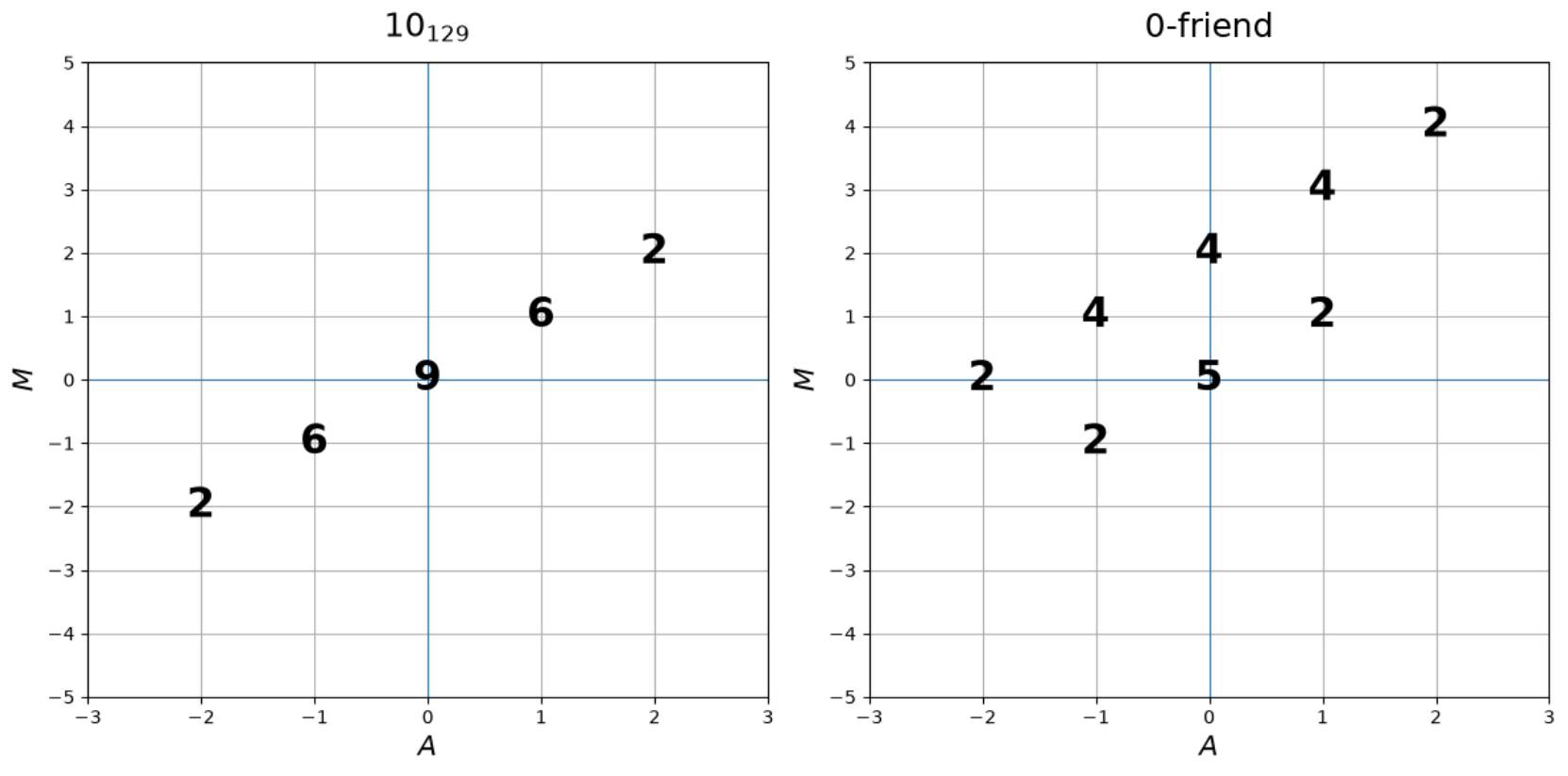}
 \end{overpic}
        \end{subfigure}
            \begin{subfigure}[b]{0.6\textwidth}
            \begin{overpic}[width=\textwidth, 
 unit=1mm, tics=5]{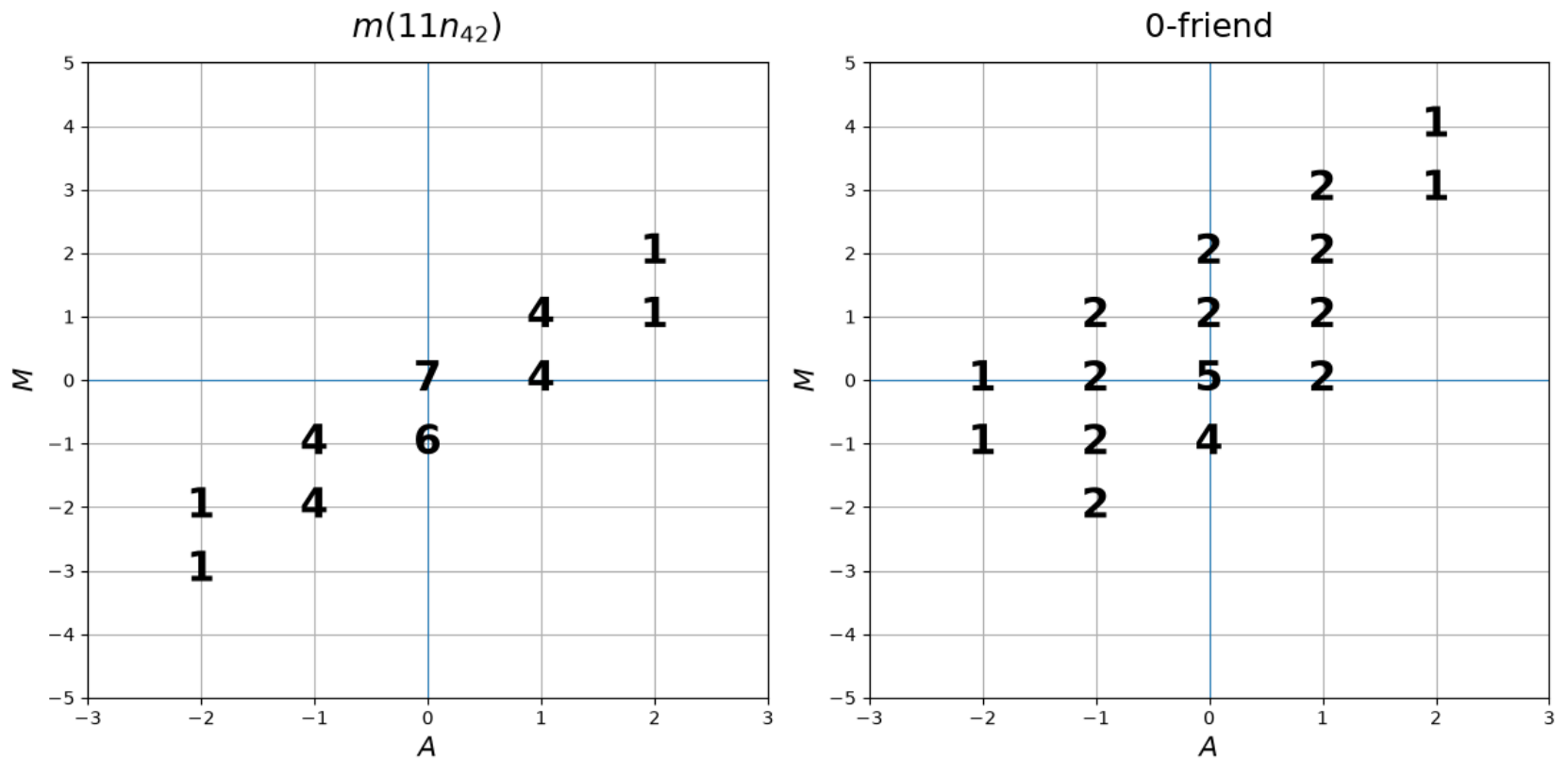} 
 \end{overpic}
        \end{subfigure}
\caption{Ranks of $\widehat{HFK}$ for $m(8_9), 10_{129}$, and $m(11n_{42})$ on the left, and of their corresponding Piccirillo friends (obtained using the unknotting crossings indicated in Figure~\ref{fig:unknotting}) on the right.}
\label{fig:plots}
\end{figure}

\bibliographystyle{abbrv}
\bibliography{mybib}
\Addresses
\end{document}